\newcommand\coloneqq{:=}
\newcommand\R{\mathbb{R}}
\newtheorem{theorem}{Theorem}[section]
\newtheorem{lemma}[theorem]{Lemma}
\newtheorem{corollary}[theorem]{Corollary}
\newtheorem{varthm1}{Main Theorem}
\newtheorem*{Th}{Theorem}
\newtheorem*{theorem67}{Polymath 8b, Theorem 6.7}
\begin{document}
\title{Prime Tuples and Siegel Zeros}
\author[T. Wright]{Thomas Wright}

\begin{abstract}
Under the assumption of infinitely many Siegel zeroes $s$ with $Re(s)>1-\frac{1}{(\log q)^{R}}$ for a sufficiently large value of $R$, we prove that there exist infinitely many $m$-tuples of primes that are $\ll e^{1.9828m}$ apart.  This "improves" (in some sense) on the bounds of Maynard-Tao, Baker-Irving, and Polymath 8b, who found bounds of $e^{3.815m}$ unconditionally and $me^{2m}$ assuming the Elliott-Halberstam conjecture; it also generalizes a 1983 result of Heath-Brown that states that infinitely many Siegel zeroes would imply infinitely many twin primes.  Under this assumption of Siegel zeroes, we also improve the upper bounds for the gaps between prime triples, quadruples, quintuples, and sextuples beyond the bounds found via Elliott-Halberstam.
\end{abstract}
\maketitle

\section{Introduction}

One of the most exciting recent breakthroughs in number theory has been the much-celebrated progress toward the twin prime and Polignac conjectures.  The current wave of excitement was sparked by Zhang \cite{Zh}, who proved in 2013 that
$$H_1\leq 70,000,000,$$
where $H_m$ is the smallest number such that there are infinitely many intervals of length $H_m$ containing at least $m+1$ primes.  Maynard and Tao \cite{Ma} were later able to establish a slightly different method that would eventually (with the help of Polymath 8b \cite{Po}) reduce this bound to
$$H_1\leq 246.$$
Moreover, Maynard and Tao (as well as Baker and Irving \cite{BI}, Stadlmann \cite{St}, and the Polymath project) were able to bound $H_m$ for larger values of $m$ as well; they gave bounds for $m=2$ to 5, as well as a general upper bound for $H_m$.  Those results have been iteratively improved from the initial work of Maynard and Tao - we list here the best current results, as found in \cite{St}:
\begin{gather*}
H_2\leq 396,516,\\
H_3\leq 24,407,016,\\
H_4\leq 1,391,051,532,\\
H_5\leq 77,510,685,234,\\
H_m\ll e^{3.8075m}.
\end{gather*}

There has also been a great deal of work done on these results under the assumption of certain conjectures.  In particular, if one assumes the Elliott-Halberstam conjecture, one can find even better results (the bounds below are listed in \cite{Po}, though the $H_1$ bound first appears in \cite{Ma}):
\begin{gather*}
H_1\leq 12,\\
H_2\leq 270,\\
H_3\leq 52,116,\\
H_4\leq 474,266,\\
H_5\leq 4,137,854,\\
H_m\ll me^{2m}
\end{gather*}
If one assumes the generalized Elliott-Halberstam conjecture, one can make even further progress on the bounds on the first two cases (see \cite{Po}), finding that
\begin{gather*}
H_1\leq 6,\\
H_2\leq 252;
\end{gather*}
though there is of yet no progress on cases with four or more primes.

Note that the conjectures mentioned above assume that primes generally behave well.  The Elliott-Halberstam conjecture assumes that the primes up to $x$ behave nicely mod $q$ (on average) for all $q<x^\theta$ as long as $\theta<1$; the generalized Elliott-Halberstam conjecture goes even further, assuming that all sequences that obey certain conditions behave nicely mod $q$ (on average) for $q<x^\theta$ with $\theta<1$.


\section{Introduction: Siegel Zeroes}

There are, however, other countervailing conjectures that assume that primes do \textit{not} behave nicely in certain circumstances.  One of the most famous is the assumption of Landau-Siegel zeroes, which is explained as follows.

For a character $\chi_q$ modulo $q$, we define as usual

$$L(s,\chi_q)=\sum_{n=1}^{\infty}\frac{\chi_q(n)}{n^s}.$$

In 1918, Landau \cite{La} proved the following:
\begin{Th} (Landau 1918)
There exists a constant $c$ such that for any $q$ and any real-valued character $\chi_q$ mod $q$, there exists at most one $s=\sigma+it$ for which $L(s,\chi_q)=0$ and $$Re(s)>1-\frac{c}{\log q(|t|+2)}.$$.
\end{Th}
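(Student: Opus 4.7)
The plan is to follow the classical de la Vallée Poussin method for zero-free regions. The engine is the elementary non-negativity $3 + 4\cos\theta + \cos(2\theta) = 2(1+\cos\theta)^2 \geq 0$. Applied at $\theta = t\log n - \arg\chi(n)$ and summed against $\Lambda(n)n^{-\sigma}$ via the Dirichlet series for $-\zeta'/\zeta$ and $-L'/L$, this promotes to the master inequality
$$-3\frac{\zeta'}{\zeta}(\sigma) - 4\,\Re\frac{L'}{L}(\sigma + it, \chi) - \Re\frac{L'}{L}(\sigma + 2it, \chi^2) \geq 0$$
for every $\sigma > 1$ and $t \in \R$.

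Next I would feed in the Hadamard partial-fraction expansions. Near $\sigma = 1$ we have $-\zeta'/\zeta(\sigma) = 1/(\sigma-1) + O(1)$, while for any non-principal Dirichlet $L$-function,
$$-\Re\frac{L'}{L}(\sigma + it, \chi) = -\sum_\rho \Re\frac{1}{\sigma + it - \rho} + O(\log q(|t|+2)),$$
with every term in the zero-sum non-negative for $\sigma > 1$. If $\rho_0 = \beta + i\gamma$ is a putative zero of $L(s,\chi)$ with $\beta$ close to $1$, setting $t = \gamma$ isolates the contribution $1/(\sigma - \beta)$ from $\rho_0$, and positivity lets me drop every other zero from the sum.

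In the \emph{generic} case, when $\gamma \neq 0$ or $\chi^2$ is non-principal, the last term of the master inequality is absorbed into $O(\log q(|\gamma|+2))$, reducing it to
$$\frac{4}{\sigma - \beta} \leq \frac{3}{\sigma - 1} + C\log q(|\gamma|+2).$$
Choosing $\sigma = 1 + \lambda/\log q(|\gamma|+2)$ for a suitably small $\lambda > 0$ then forces $1 - \beta \gg 1/\log q(|\gamma|+2)$, delivering the advertised zero-free region in this regime.

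The main obstacle is the \emph{exceptional} case where $\chi$ is real and $\gamma = 0$: now $\chi^2$ is principal, and the pole of $\zeta$ injects an extra $1/(\sigma-1)$ into $-\Re L'/L(\sigma,\chi^2)$, so the argument collapses to $\frac{4}{\sigma - \beta} \leq \frac{4}{\sigma - 1} + C\log q$, which cannot preclude $\beta$ arbitrarily close to $1$. I would resolve this by the standard \emph{at-most-one} argument: if $L(\cdot,\chi)$ possessed two real zeros $\beta_1,\beta_2$ in the exceptional region, both would appear as non-negative terms in the partial-fraction sum, yielding $\frac{4}{\sigma-1} + C\log q \geq \frac{4}{\sigma-\beta_1} + \frac{4}{\sigma-\beta_2}$; a judicious $\sigma = 1 + c'/\log q$ then produces a contradiction, provided the constant $c$ in the theorem is taken small enough. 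The remaining possibility of a genuinely complex zero $\rho_0$ with $|\gamma|$ extremely small is dispatched by the same doubling trick applied to the conjugate pair $\rho_0,\bar\rho_0$, which are both zeros because $\chi$ is real.
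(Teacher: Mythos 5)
The paper states this result as a citation to Landau (1918) and supplies no proof, so there is no internal argument to compare against; what you have written is the standard de la Vall\'ee Poussin--Landau argument, and its skeleton is sound: the $3+4\cos\theta+\cos 2\theta\geq 0$ identity yielding the master inequality, the Hadamard partial-fraction expansion together with the positivity of $\Re\frac{1}{s-\rho}$ for $\sigma>1$, the degeneracy when $\chi^2$ is principal and $\gamma=0$, and the ``at most one'' contradiction obtained by keeping two real zeros in the partial-fraction sum. One point in the sketch deserves tightening. Here $\chi$ is assumed real, so $\chi^2$ is \emph{always} principal; your ``generic case: $\gamma\neq 0$ or $\chi^2$ non-principal'' therefore reduces to just ``$\gamma\neq 0$,'' and in that case the $\chi^2$ term is not automatically $O(\log q(|\gamma|+2))$. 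Concretely, $\Re\frac{L'}{L}(\sigma+2i\gamma,\chi_0)$ contains the pole term $\frac{\sigma-1}{(\sigma-1)^2+4\gamma^2}$; this is negligible only when $|\gamma|\gg(\log q)^{-1}$, whereas for $0<|\gamma|\ll(\log q)^{-1}$ it is of size $\asymp\frac{1}{\sigma-1}$ and must be beaten by the conjugate-pair contribution $\Re\frac{1}{\sigma+i\gamma-\bar\rho_0}$ at $t=\gamma$. You do flag this in your final paragraph, so the logic closes, but the sketch would be clearer if the dichotomy in $|\gamma|$ versus $(\log q)^{-1}$ were stated explicitly and the generic/exceptional split were not phrased in terms of $\chi^2$ being non-principal, a case that never arises under the hypotheses of the statement.
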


Such a zero, if it is to exist, is known as a \textit{Siegel zero}, a \textit{Landau-Siegel zero}, or an \textit{exceptional zero}, and the associated character is known as an \textit{exceptional character}.  It is known that if such a zero exists, it must be real (i.e. $t=0$) and the character must be real-valued as well.

We note that in our definition above (and indeed in the literature about the topic), the definition of a Siegel zero is a bit ill-defined in that it depends on the choice of a constant $c$ in the equation above.  Generally, at the beginning of a paper assuming the existence of Siegel zeroes, the authors must declare which specific definition of Siegel zero they will require for their theorem; we will do so here shortly.

Before we do that, however, we recall the two theorems that give us the best known upper bounds for Siegel zeroes.  The first is the best effective bound; the second is the best ineffective bound:

\begin{Th} (Landau 1918)
There exists an effectively computable constant $C$ such that for any $q$ and any real-valued character $\chi_q$ mod $q$, a Siegel zero $\beta$ must satisfy
$$\beta<1-\frac{C}{q^\frac 12 \log^2 q}.$$.
\end{Th}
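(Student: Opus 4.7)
The plan is to combine a standard upper bound for $L(1,\chi)-L(\beta,\chi)$ with Dirichlet's class number lower bound for $L(1,\chi)$. Since $\chi$ is real and $\beta$ is a real zero, the mean value theorem applied to the real-variable function $s\mapsto L(s,\chi)$ gives
$$L(1,\chi)=L(1,\chi)-L(\beta,\chi)=\int_{\beta}^{1}L'(s,\chi)\,ds\leq (1-\beta)\sup_{s\in[\beta,1]}|L'(s,\chi)|.$$
So the whole theorem reduces to an upper bound for $|L'(s,\chi)|$ on $[\beta,1]$ and a lower bound for $L(1,\chi)$.

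For the upper bound on $L'(s,\chi)=-\sum_{n\geq 1}\chi(n)(\log n)n^{-s}$, I would apply partial summation to the partial sums of $\chi$. The Pólya--Vinogradov inequality gives $\left|\sum_{n\leq N}\chi(n)\right|\ll q^{1/2}\log q$, and splitting the range into $n\leq q$ and $n>q$ and inserting this into partial summation produces a bound of the shape $|L'(s,\chi)|\ll \log^{2}q$ uniformly for $s\in[1/2,1]$, which certainly covers $[\beta,1]$. (One can in fact be a bit crude here since we only need a power of $\log q$.)

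For the lower bound on $L(1,\chi)$, I would invoke the Dirichlet class number formula for the real quadratic (resp.\ imaginary quadratic) field attached to $\chi$. In the imaginary case $L(1,\chi)=2\pi h/(w\sqrt{|d|})$, and in the real case $L(1,\chi)=2h\log\varepsilon/\sqrt{d}$, where $h\geq 1$ since it is a class number and $\log\varepsilon\geq\log\frac{1+\sqrt 5}{2}>0$ for the fundamental unit. In either case $L(1,\chi)\geq c/\sqrt{q}$ for an absolute constant $c>0$. Plugging these two estimates back into the mean value bound yields
$$\frac{c}{\sqrt{q}}\leq L(1,\chi)\leq (1-\beta)\cdot O(\log^{2}q),$$
which rearranges to $\beta<1-C/(q^{1/2}\log^{2}q)$, as desired.

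The main obstacle, and the only genuinely non-elementary ingredient, is the lower bound $L(1,\chi)\gg q^{-1/2}$. Everything else is partial summation and elementary calculus. I would therefore spend most of the writeup justifying the class number formula input and making the constant $C$ explicit; the $L'$ bound and the mean value step are routine.
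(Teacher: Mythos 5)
The paper does not prove this statement; it is quoted as a background fact and attributed to Landau's 1918 paper, so there is no internal proof to compare against. Your proposal is, however, the standard classical proof of Landau's upper bound, and the overall structure is correct: write $L(1,\chi)=L(1,\chi)-L(\beta,\chi)$, bound this via the mean value theorem by $(1-\beta)\sup|L'|$, control $|L'|$ by P\'olya--Vinogradov plus partial summation, and bound $L(1,\chi)$ from below via the Dirichlet class number formula ($L(1,\chi)\gg q^{-1/2}$ in both the imaginary case, $h\geq 1$, $w\leq 6$, and the real case, $h\log\varepsilon$ bounded below by an absolute positive constant).

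The one thing you should correct is the claimed range of validity of the derivative bound. The estimate $|L'(s,\chi)|\ll\log^{2}q$ does \emph{not} hold uniformly for $s\in[1/2,1]$: in the head sum $\sum_{n\leq q}(\log n)n^{-\sigma}$, if $\sigma=1-\delta$ then the sum is of order $q^{\delta}(\log q)^{2}/(\text{powers of }\delta)$, which is $\gg q^{1/2}$ at $\sigma=1/2$. The bound $|L'(s,\chi)|\ll\log^{2}q$ is valid only for $\sigma\geq 1-O(1/\log q)$. This does not damage the argument, because by definition a Siegel zero $\beta$ already satisfies $\beta>1-c/\log q$ (Landau's other 1918 theorem, quoted just above in the paper), and if $\beta\leq 1-c/\log q$ the conclusion $\beta<1-C/(q^{1/2}\log^{2}q)$ is trivially true for $q$ large. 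So you should restrict the mean value step to the interval $[\beta,1]\subset[1-c/\log q,1]$ and state the $L'$ bound only there; as written, the claim about $[1/2,1]$ is false.
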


\begin{Th} (Siegel 1935)
For any $\epsilon$ there exists a constant $C(\epsilon)$ such that
$$\beta<1-C(\epsilon)q^{-\epsilon}.$$
\end{Th}

Of course, mathematicians do not expect such a zero to exist for any $q$ or $\chi_q$; in fact, the generalized Riemann hypothesis predicts all non-trivial zeroes will have $Re(s)=\frac 12$.  Nevertheless, we cannot yet rule out the existence of Siegel zeroes.  In fact, the world of Siegel zeros has been described as ``an `illusory' or `ghostly' parallel universe in analytic number theory that should not actually exist, but is surprisingly self-consistent and to date proven to be impossible to banish from the realm of possibility\footnote{This quote appears in Terence Tao's blog article ``Heath-Brown’s theorem on prime twins and Siegel zeroes," published 26 August 2015.}."

We reiterate that if such an $s$ and $\chi_q$ exist with $\chi_q$ being a primitive character mod $q$, $\chi_q$ must also be a real-valued character mod $q$.  This means that there must exist a fundamental discriminant $M|q$ such that $\chi_q(n)$ is derived from the Kronecker symbol $\left(\frac Mn\right)$.




Surprisingly, the existence of a Siegel zero often yields unusually nice results, including results related to prime gaps.  Perhaps the nicest example of such a result is the following, which appears in \cite{HB83}:

\begin{Th} (Heath-Brown 1983) Let $\chi_q$ be a character mod $q$, and let $\beta$ be such that $L(\beta,\chi_q)=0$ with
$$\beta>1-\frac{1}{3\log q}.$$
If there are infinitely many $\chi_q$ for which such a $\beta$ exists then there are infinitely many twin primes.
\end{Th}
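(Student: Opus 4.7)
My plan is to apply a Selberg-type sieve on pairs $(n, n+2)$ over a range $N < n \leq 2N$ with $N$ a suitable power of $q$, leveraging the Siegel zero to overcome the parity obstruction that ordinarily prevents sieves from producing primes directly.

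The key starting observation is that under the hypothesis $L(\beta, \chi_q) = 0$ with $\beta > 1 - \frac{1}{3\log q}$, the standard explicit formula gives
$$\sum_{n \leq x} \Lambda(n)\chi_q(n) = -\frac{x^\beta}{\beta} + O\!\left(x e^{-c\sqrt{\log x}}\right),$$
which is essentially $-x$ for $x$ in the relevant range. Combining with the prime number theorem yields $\sum_{n \leq x} \Lambda(n)(1+\chi_q(n)) = o(x)$, so almost every prime $p \leq x$ satisfies $\chi_q(p) = -1$. This gives a strong bias: the primes concentrate on integers where $\chi_q(n)$ matches $\mu(n)$ on the squarefree part, providing the asymmetry needed to defeat parity.

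With this bias in hand, I would study a weighted sum of the form
$$S(N) = \sum_{N < n \leq 2N}\Bigl(\sum_{d \mid n(n+2)} \lambda_d\Bigr)^{\!2} h(n)\,h(n+2),$$
where $\lambda_d$ are Selberg sieve coefficients truncated at level $D = N^\theta$ and $h$ is a nonnegative weight constructed from $1 + \chi_q$ or a truncated divisor-sum proxy. When expanded, the main term of this sum is controlled by the density of $\chi$-typical integers, which is $\gg N$ under the Siegel zero. Subtracting an upper-bound sieve for pairs where at least one of $n, n+2$ has more than one prime factor yields a positive lower bound on the count of twin primes, provided the Siegel zero saving dominates the sieve errors.

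The core obstacle is quantitative: the saving from the Siegel zero is of order $1-\beta$, while the various sieve error terms---including bilinear sums arising in the level-of-distribution estimates for sums twisted by $\chi_q$---are controlled by powers of $(\log q)^{-1}$. The constant $\frac{1}{3}$ in the hypothesis is exactly what is needed to balance these competing terms once $N$ is chosen to be a suitable power of $q$. Carrying out this balance while correctly accounting for atypical primes (those with $\chi_q(p) = +1$) and the archimedean smoothing of the interval $(N, 2N]$ is the delicate analytic heart of the argument.
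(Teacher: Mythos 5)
The paper does not prove this statement; it simply cites Heath-Brown's 1983 result, so there is no in-paper proof to compare against. Evaluating your sketch against what Heath-Brown actually does, the spirit is right but the mechanism is off in a way that matters.

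First, a sign problem: you propose a weight $h$ built from $1+\chi_q$, but under the Siegel-zero hypothesis almost all primes $p$ have $\chi_q(p)=-1$, so $(1+\chi_q)(p)\approx 0$. Weighting a twin-prime sum by $h(n)h(n+2)$ with this $h$ would annihilate precisely the pairs you want to detect; presumably you meant $1-\chi_q$. Second, and more fundamentally, simply knowing that $\chi_q(p)\approx -1$ for most primes does not by itself defeat parity in a Selberg-type sieve on $n(n+2)$: the sieve still only sees the divisor structure of $n(n+2)$ and cannot distinguish products of two primes from single primes without additional arithmetic input. What actually drives Heath-Brown's argument is the identity $\Lambda * (1*\chi) = \log * \chi$, i.e.
$$\Lambda(n) = \sum_{d\mid n}\chi(d)\log(n/d)\;-\;\sum_{\substack{d\mid n\\ d<n}}\Lambda(d)\,\lambda(n/d),\qquad \lambda=1*\chi,$$
together with the observation that $\lambda\geq 0$ and $\sum_{n\leq x}\lambda(n)$ is very small when $\beta$ is near $1$, so $\lambda$ is essentially supported on perfect squares. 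The first term is a divisor-type function with level of distribution close to $1$; the second is negligible because $\lambda$ is sparse. This substitution turns $\Lambda$ into a sieve-amenable object and is the actual point at which parity is broken. Your sketch has no analogue of this step, and without it the proposed lower bound would not isolate genuine twin primes from almost-primes; the ``delicate analytic heart'' you defer is in fact the missing idea. Finally, the explicit-formula bound you state needs the Deuring--Heilbronn zero-repulsion phenomenon to hold uniformly, which should be flagged rather than treated as standard.
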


In particular, Heath-Brown found that for any such $q$, the number of twin primes $(p,p+2)$ with $q^{250}\leq p\leq q^{500}$ is as conjectured in the Hardy-Littlewood conjectures\footnote{These bounds on $q$ have since been widened by Tao and Ter\"{a}v\"{a}inen to $q^{20.5+\epsilon}\leq p\leq q^{\eta^{\frac 12}}$, where $\eta=\frac{\log q}{1-\beta}$ \cite{TT}.}.

Unfortunately, Heath-Brown's methods do not appear to generalize to tuples beyond pairs.  However, with the new results on prime tuples, it seems natural to ask whether Siegel zeroes might be of help in the more general prime tuple case.  This is the motivating question of our current paper.

\section{Main Result}

We assume a somewhat stronger condition on the exceptionality of the Siegel zeroes to prove the following:


\begin{varthm1}
Fix an $A>2$, and let $r=554,401$.  Assume that there are infinitely many $D$ and $\chi_D$ such that for each $D$, there exists a real $s_D$ with $L(s_D,\chi_D)=0$
and $$s_D>1-\frac{1}{(\log D)^{r^r+A}}.$$
Then for any $m\geq 1$,
$$H_m\ll e^{1.9828m}.$$
\end{varthm1}

For smaller values of $m$, we also find the following:
\begin{varthm1} With the same assumptions about Siegel zeroes as given in the previous theorem, we have
\begin{gather*}
H_2\leq 264,\\
H_3\leq 49,192,\\
H_4\leq 439,812,\\
H_5\leq 3,775,860.
\end{gather*}
\end{varthm1}
We note that all of these are better results than the assumption of Elliott-Halberstam, although our $H_2$ result is still a lesser bound than the one given by generalized Elliott-Halberstam.  In the case of $m=1$, our methods give $H_1\leq 12$, which, while on par with Elliott-Halberstam, is obviously majorized by Heath-Brown's result and hence is not further discussed here.

\section{Methods}

The key result we will use will be one of Friedlander and Iwaniec \cite{FI}, who proved the following:


\begin{theorem}\label{FIFI} (Friedlander-Iwaniec 1992)
Let $\chi_D$ be a real character mod $D$.  Let $x>D^r$ with $r=554,401$, let $q<x^{\frac{233}{462}}$, and let $(a,q)=1$.  Then
\begin{gather}\label{eqn41formula}\pi(x,q,a)=\frac{\pi(x)}{\phi(q)}\left(1-\chi_D\left(\frac{aD}{(q,D)}\right)+O\left(L(1,\chi_D)(\log x)^{r^r}\right)\right),\end{gather}
where the constant is absolute and computable.
\end{theorem}
We note here that while Friedlander and Iwaniec claim this theorem for $q<x^{\frac{233}{462}}$, their work actually proves this theorem for $$q<x^{\frac{58\left(1-\frac 1r\right)}{115}}=x^{.5043469...},$$which is a slightly better bound (since $233/462=.504329...$).  Friedlander and Iwaniec simply chose the above to simplify the statements of their main theorems; however, this additional leeway will slightly improve our results\footnote{See \cite{FI}, Page 2049 for more information; Friedlander's and Iwaniec's simplification of results occurs between (5.11) and Proposition 5.2. Friedlander and Iwaniec themselves note that they have shrunk the bounds slightly to make the result easier to state.}.

Now, Theorem \ref{FIFI} is true for any choice of $\chi_D$ or $D$, but it is only nontrivial when $L(1,\chi_D)$ is small, which means that the character must be exceptional.  Note that if $s$ is a Siegel zero then $L(1,\chi_D)\ll (1-Re(s))\log^2 D$ by mean value theorem (since $L'(s,\chi_D)\ll \log^2 D$ on the interval between the Siegel zero and 1), so the hypothesis on Siegel zeroes in the Main Theorem is sufficient to prove that the error term here is less than the main term.

In their paper, Friedlander and Iwaniec highlight the fact that if $q$ and $D$ are relatively prime, one can find equidistribution of primes in arithmetic progressions mod $q$ as long as $q<x^{\frac{233}{462}}$, which is a better result than the $x^{\frac 12-\epsilon}$ that is achievable under GRH. In our case, however, the more interesting result is what happens when $D|q$ and $\chi_D(a)=-1$:

\begin{corollary} \label{HI} Fix $A$ and $r$ as in Main Theorem 1, and assume that the conditions on Siegel zeroes in that theorem are satisfied as well.  Let $D|q$ and $\chi_D\left(a\right)=-1$.  If $q<x^{\frac{58}{115}\left(1-\frac 1r\right)}$ then
$$\pi(x,q,a)=\frac{2\pi(x)}{\phi(q)}+O\left(\frac{\pi(x)}{\phi(q)(\log x)^{A-2}}\right)$$
\end{corollary}

In other words, the number of primes is double what one would normally expect in a congruence class.  This is the key insight that will allow our main theorems to be proven.

In order to explain this insight, let us define $\theta'$ to be the level of distribution of the primes, which is to say the supremum of the exponents $t$ for which
$$\sum_{q\leq x^{t}}\max_{(a,q)=1}\left|\pi(x,q,a)-\frac{\pi(x)}{\phi(q)}\right|\ll \frac{x}{\log^B x}$$
for every $B>0$.

In \cite{Ma} and \cite{Po}, the authors define a quantity $M_k$ as the supremum of the quotient of two integrals (see (\ref{Ik})-(\ref{Mk}) below for a more specific definition).  For practical purposes, however, this quantity gives information about the ratio between $S_2$, the sieved sum over the primes, and $S_1$, the sum of the sieve weights themselves.  The work of \cite{Ma} establishes that for us to find tuples of $m+1$ primes in intervals of length $k$, we require
$$M_k>\frac{2m}{\theta'}.$$
In our case, let $\theta$ denote the largest value for which (\ref{eqn41formula}) holds for every $q\leq x^{\theta}$.  If we choose all of our primes from a congruence class $a$ modulo $D$ for which $\chi(a)=-1$ then by the corollary above, we will have double the expected number of primes; thus, the size of $S_2$ is also doubled, and hence we only require
$$M_k>\frac{m}{\theta}.$$
While our $\theta$ isn't as large as $\theta'$ would be under Elliott-Halberstam (which allows any $\theta'<1$), the excess of primes has the same effect as would doubling the value of $\theta'$.  Since our $\theta$ is also greater than 1/2 according to Theorem \ref{FIFI}, this means that our results actually surpass those assuming Elliott-Halberstam.

\section{Remarks}

Obviously, no one has made conjectures as to what the largest possible value for $\theta$ might be for (\ref{eqn41formula}) or Corollary \ref{HI} to hold, since most mathematicians do not believe there are Siegel zeroes at all.  In the paper of Friedlander and Iwaniec, the authors encounter an obstruction at $\theta=\frac{58}{115}(1-\frac 1r)$ from the error term and then another one at $\theta=2/3$ from the main term.  The authors remark that while overcoming the first bound might be tractable, the second one seems a bit more difficult to improve.  The reason for this is that the first bound is derived from a paper on ternary divisor sums \cite{FIt} where the main result has since been improved; the improved version is not yet known to be amenable to the methods of \cite{FI}, but it is not hard to imagine that one could find a way to apply the improved result.  The second bound, however, comes from the famous Weil bound for Kloosterman sums; as such, any improvement over 2/3 would likely require either a method of estimating the main term that avoids using this bound or else an improvement on the Weil bound itself.


On the other hand, conjectures about primes in arithmetic progressions (such as Montgomery's conjecture on arithmetic progressions \cite{Mo1}, \cite{Mo2} or Elliott-Halberstam) say that the equidistribution of primes in classes should apply modulo $q<x^{\theta'}$ for every $\theta'<1$.  Let's imagine for a moment that one could make a similar conjecture about \textit{non}-equidistribution of primes mod $q$ in the presence of a Siegel zero; in other words, assume that (\ref{eqn41formula}) and Corollary \ref{HI} could be proven for $q<x^{\theta}$ for every $\theta<1$, and assume that there are infinitely many Siegel zeroes as required by the Main Theorems.  Then one would find the following\footnote{The numbers on this list come from \cite{Po}, Table 3, where the numbers chosen are simply the lowest $k$ for which $M_k>m+1$.} bounds for $H_m$:


\begin{gather*}
H_1=2\\
H_2\leq 12,\\
H_4\leq 270,\\
H_6\leq 52,116,\\
H_m\ll e^{(1+\epsilon') m},
\end{gather*}
where the last line holds for any $\epsilon'>0$.


The result for $H_1$ is notable here, since it would be a resolution of the twin prime conjecture.  Note that this result for $H_1$ does not require $\theta$ to go all the way up to 1; if the distribution held for $\theta=.722$, we could recover the twin primes result of Heath-Brown \cite{HB83}.

\section{Outline}
The proof, for the most part, will follow the framework of Maynard and Tao.  The differences are as follows:

1.) Obviously, instead of invoking Bombieri-Vinogradov or Elliott-Halberstam, we invoke the result of Friedlander and Iwaniec.  Since we have an explicit error term mod $q$ for each $q$, we are not required to do any fancy Cauchy-Scwhartz machinations; we can simply plug in an estimate for $\pi(x,q,a)$ and then sum this term up over all allowable choices of $q$.


2.) If we wish to ensure that the number of primes is double what Dirichlet's theorem would predict, we must shift the prime tuple so as to put all of the terms in classes $a$ mod $q$ for which $\chi_D(a)=-1$ and $D|q$.  Since $D$ is small relative to $x$, this causes little issue in our analysis.  This shifting is covered in section \ref{shift}.

From here, the proof mostly follows the ideas described in \cite{Ma}, albeit with $W$ a bit bigger than in the original paper.  We note that \cite{Ka} and \cite{BFM} outline the process for dealing with a larger $W$, and the effect on the final calculations is minimal.


\section{Shifting the Primes}\label{shift}

To begin, let us choose an admissible tuple
$$T=\{n+h_1,\cdots,n+h_k\}.$$


In order to use the Friedlander-Iwaniec result, we must first shift our prime tuple so that all of the terms are in a good class for the character modulo $D$.

We know that an exceptional zero indicates that the character must be real-valued and hence must arise from a Kronecker character.  As such, the character can only take the values of $-1$, 1, and 0.  We will deal only with the case where $\chi_D$ is a primitive character; in other words,
$$\chi_D(n)=\left(\frac{\chi_D(-1)D}{n}\right).$$
We do not need to consider the case when $\chi_D$ is imprimitive, for if it is, it must arise from a primitive character $\chi_{C}$ for some $C|D$.  If that were true, $L(s,\chi_{C})$ would have the same Siegel zero with $$Re(s)>1-\frac{1}{(\log D)^{r^r+A}}>1-\frac{1}{(\log C)^{r^r+A}}.$$  So we may simply take $C$ to be our initial $D$ and proceed as normal.

First, let $g$ be the largest prime divisor of $D$, and let
\begin{gather}\label{DnD}
D=gD'.
\end{gather}
Since $\chi_D$ is primitive, we know that $D$ must be square-free except possibly for a power of 2 that is at most $2^3$.  In particular, this means that $g\gg \log D.$

Now, our character must split up multiplicatively, so
$\chi_D(n)=\chi_g(n)\chi_{D'}(n),$
where $\chi_g(n)$ and $\chi_{D'}(n)$ must both take values in $\{-1,1,0\}$.

In this section, our goal will be to show the following:

\begin{lemma}\label{kron}
For a fixed $k$, let $D$ be such that $\log\log\log(D)\gg k$.  Then there exists an $l\leq D$ such that for all $i$,
$$\chi_D\left(l+h_i\right)=-1.$$
\end{lemma}
\begin{proof}
First, we know that since our tuple is admissible, there exists an $n'$ such that $(n'+h_i,D)=1$ for each $i$.

Now, consider the expression
$$\prod_{i=1}^k \left(1-\chi_D(x+n'+h_i)\right).$$
Clearly, this expression is always non-negative, but it will only equal a non-zero value when all of the $\chi_D(x+n'+h_i)$ are either -1 or 0.

So for $D'$ and $g$ as defined above in (\ref{DnD}), let $x=D'y$ and consider the sum
$$H=\sum_{y=1}^g\prod_{i=1}^k \left(1-\chi_D(D'y+n'+h_i)\right),$$
Note that since $g$ is prime and $(n'+h_i,gD')=1$ for all $i$, $\chi_D(D'y+n'+h_i)$ can only equal 0 if $g|D'y+n'+h_i$; for each choice of $i$, this will occur exactly once as $y$ ranges from 1 to $g$.  So there are exactly $k$ choices of $y$ where any of the $\chi_D(D'y+n'+h_i)=0$.  For each of these choices of $y$,
$$0\leq \prod_{i=1}^k \left(1-\chi_{D}(D'y+n'+h_i)\right)\leq 2^{k-1}.$$
So if $H>k2^{k-1}$ then there must exist a tuple where all of the $\chi_D(D'y+n'+h_i)=-1$, which is as required.

Now, we can rewrite $H$ as
\begin{align*}
H=&\sum_{y=1}^g\prod_{i=1}^k \left(1-\chi_{D'}(D'y+n'+h_i)\chi_{g}(D'y+n'+h_i)\right)\\
=&\sum_{y=1}^g\prod_{i=1}^k \left(1-\chi_{D'}(n'+h_i)\chi_{g}(D'y+n'+h_i)\right)
\end{align*}
For each $i$, let us write $\epsilon_i=\chi_{D'}(n'+h_i)$.  Again, these $\epsilon_i$ are all $\pm 1$ since $(D',n'+h_i)=1$.  So
\begin{align*}
H=&\left|\sum_{y=1}^g\prod_{i=1}^k \left(\epsilon_i-\chi_{g}(D'y+n'+h_i)\right)\right|
\end{align*}
Let us multiply out the product.  So
\begin{align*}
H=&\left|\left(\sum_{y=1}^g\prod_{i=1}^k\epsilon_i\right)+ \left(\sum_{y=1}^g\sum_{j=1}^{2^k-1}\left(\prod_{i\in S_j}\epsilon_i\right)\chi_{g}(Q_j(y))\right)\right|
\end{align*}
where the $Q_j(y)$ are polynomials of degree $\leq k$ that are square-free when viewed over $\mathbb F_g$, and the $S_j$ are distinct subsets of $\{1,2,\cdots,k\}$.  By triangle inequality,
\begin{align*}
H\geq &\left|\sum_{y=1}^g\prod_{i=1}^k\epsilon_i\right|- \left|\sum_{j=1}^{2^k-1}\left(\prod_{i\in S_j}\epsilon_i\right)\sum_{y=1}^g\chi_{g}(Q_j(y))\right|\\
\geq &\left|\sum_{y=1}^g 1\right|- \sum_{j=1}^{2^k-1}\left|\sum_{y=1}^g\chi_{g}(Q_j(y))\right|
\end{align*}

The first absolute value is clearly equal to $g$; for the second, we can use the Weil bound for character sums over $\mathbb F_g$, finding that
$$\left|\sum_{y=1}^g\chi_{g}(Q_j(y))\right|\leq (k-1)\sqrt g.$$
So
$$H\geq g-k2^{k-1}\sqrt g.$$
This is clearly larger than $k2^{k-1}$ for large values of $g$.  So there must exist a $y'\leq g$ such that
$\chi_D(D'y'+n'+h_i)=-1$ for all $i$.  We then set
$$l\equiv D'y'+n'\pmod D$$to complete the theorem.
\end{proof}

\section{The Maynard-Tao Sieve}\label{section8}

Now, we proceed as in the work of Maynard-Tao.  First, in \cite{Ma}, the author introduces a parameter $W$ that eliminates the concern for small primes.  We do something similar here.

Fix a large $L>r$.  For a given $D$, let $X=D^{L}$, and let $R=X^{\frac{\theta}{2}-\epsilon}$ for some $\theta<\frac{58}{115}(1-\frac 1r)$ and $\epsilon>\frac 1L$.  We then define

$$V=\prod_{\substack{2<p<\log\log\log X\\ p\nmid D}}p,$$

and let

$$W=DV.$$

Because our tuple is admissible, we know that there exists a congruence class $v$ mod $V$ such that all of the terms in the tuple are coprime to $V$.  So for $l$ as in Lemma \ref{kron}, define $v_0$ such that

\begin{gather*}
v_0\equiv v\pmod V\\
v_0\equiv l\pmod{D}
\end{gather*}



As is standard, for a well-chosen function $\lambda$, we let

$$S_1=\sum_{\substack{n\equiv v_0\pmod W\\ X\leq n\leq 2X}}\left(\sum_{d_i|n+h_i}\lambda_{d_1,\cdots,d_k}\right)^2,$$
and
$$S_2=\sum_{\substack{n\equiv v_0\pmod W\\ X\leq n\leq 2X}}\sum_{i=1}^k \textbf{1}_{\mathbb P}(n+h_i)\left(\sum_{d_i|n+h_i}\lambda_{d_1,\cdots,d_k}\right)^2.$$

This is essentially the sum that was considered in \cite{Ma}.  The only difference here is that $W$ is of size $X^\epsilon$ for some $\epsilon<\frac 1r$.  However, such alterations have been addressed in \cite{Ka}, \cite{BFM}, and elsewhere; the only difference is a slight change in the allowable level of distribution.

Obviously, it will be helpful if we have a definition for $\lambda$.
To this end, let $S_k$ be the set of all piecewise differentiable functions $F:[0,\infty)^k\to \mathbb R$ where the support of $F$ lies on the simplex
$$\mathcal R_k=\{(t_1,...,t_k):\sum_{i=1}^kt_i\leq 1\}$$
For our purposes, we will choose functions $F\in \mathcal S_k$ and then let
$$\lambda_{d_1,\cdots,d_k}=\mu(d_1)\cdots \mu(d_k)F\left(\frac{\log d_1}{\log R},\cdots,\frac{\log d_k}{\log R}\right).$$
We require a lemma about the behavior of such functions; this lemma is the crux of Maynard and Tao's work, since this lemma allows one to turn the search for prime gaps into a search for the best possible $F$.  Define
$$B=\frac{\phi(W)}{W}\log R,$$
and let
\begin{gather}\label{Ik}
I_k(F)=\int_0^\infty...\int_0^\infty F(t_1,...,t_k)^2 dt_1...dt_k,\\
J_k^{(m)}(F)=\int_0^\infty...\left(\int_0^\infty F(t_1,...,t_k)^2 dt_m\right)dt_1...dt_{m-1}dt_{m+1}dt_{k}.
\end{gather}
In practice (in \cite{Po} and elsewhere), the $F$ are always taken to be symmetric; thus, the $J_k^{(m)}(F)$ are independent of $m$, and hence we will often drop the superscript.  So we have the following:
\begin{lemma}\label{L41}
Let $F$, $\lambda$, $W$, and $X$ be as described above.  Then
$$\sum_{\substack{d_1,\cdots d_k,e_1,\cdots,e_k\\ [d_1,e_1],\cdots,[d_k,e_k],W\mbox{ }coprime}}\frac{\lambda_{d_1,\cdots,d_k}\lambda_{e_1,\cdots,e_k}}{[d_j,e_j]}=(1+o(1))B^{-k}I_k(F)$$
and
$$\sum_{\substack{d_1,\cdots d_{k-1},e_1,\cdots,e_{k-1}\\ [d_1,e_1],\cdots,[d_k,e_k],W\mbox{ }coprime}}\frac{\lambda_{d_1,\cdots,d_{k-1},1}\lambda_{e_1,\cdots,e_{k-1},1}}{\phi([d_j,e_j])}=(1+o(1))B^{1-k}J_k(F).$$
\end{lemma}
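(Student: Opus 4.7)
The plan is to follow the standard Maynard-Tao calculation \cite{Ma} adapted to the tensor-product structure $F_j = \prod_i F_{i,j}$ and to the slightly enlarged modulus $W = DV$. Expanding the square on the left-hand side and substituting the definition of $\lambda$ collapses the $2k$-fold sum into a double sum over the outer indices $(j,j')$ of products of one-variable sums,
$$\sum_{\substack{d_i,e_i \\ ([d_i,e_i],W)=1}} \frac{\lambda_{d_1,\ldots,d_k}\lambda_{e_1,\ldots,e_k}}{\prod_i [d_i,e_i]} = \sum_{j,j'} \prod_{i=1}^k T_W(F_{i,j}, F_{i,j'}),$$
where
$$T_W(f,g) = \sum_{([d,e],W)=1} \frac{\mu(d)\mu(e)\, f(\log d/\log R)\, g(\log e/\log R)}{[d,e]}.$$
This factorisation exploits both the tensor form of each $F_j$ and the multiplicativity of the constraint $([d_i,e_i],W)=1$.

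I would then evaluate each $T_W(f,g)$ by a Mellin/Perron argument, reducing to the two-variable Dirichlet series
$$D_W(s_1,s_2) = \prod_{p\nmid W}\!\left(1 - p^{-1-s_1} - p^{-1-s_2} + p^{-1-s_1-s_2}\right),$$
which shares its pole structure near the origin with $\zeta(1+s_1)\zeta(1+s_2)/\zeta(1+s_1+s_2)$, up to a finite Euler product supported on primes dividing $W$. Shifting contours and extracting the double residue yields a main term of shape $(\phi(W)/W \cdot \log R)^{-1} \mathcal L(f,g)$, where $\mathcal L$ is the natural bilinear pairing produced by the residue calculation. Taking the product over $i$ and summing over $(j,j')$ reverses the tensor decomposition, giving $\sum_{j,j'}\prod_i \mathcal L(F_{i,j},F_{i,j'}) = I_k(F)$; the $k$ factors of $(\phi(W)/W\cdot\log R)^{-1}$ assemble to $B^{-k}$.

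The second identity follows the same scheme except that $1/[d_k,e_k]$ is replaced by $1/\phi([d_k,e_k])$ with $d_k = e_k = 1$ fixed. This modifies the final-coordinate Euler factors at primes $p\nmid W$ by $p/(p-1)$, shifting the residue in a way that injects an extra factor of $B = \phi(W)/W\cdot\log R$ into the answer (hence $B^{1-k}$) and replaces the final pairing by the partial integration defining $J_k(F)$; the remaining $k-1$ coordinates are treated exactly as before.

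The chief obstacle is not the algebra but the uniformity: here $W$ is of size $X^\epsilon$ with $\epsilon < 1/r$, rather than $(\log X)^{O(1)}$ as in \cite{Ma}. One must verify that error terms polynomial in $\log W$ and in $W/\phi(W)$ remain dominated by the main term once $R = X^{\theta/2}$ is chosen with $\theta$ strictly below $\tfrac{58}{115}(1 - 1/r)$. This is precisely the uniformity bookkeeping carried out in \cite{Ka} and \cite{BFM}, which I would invoke directly; the only cost is a small shrinkage in the effective level of distribution, already accounted for in our setup.
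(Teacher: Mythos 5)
The paper's own ``proof'' is a single line: it cites (5.2) and (5.18) of Maynard's paper and nothing more. Your proposal is a genuine alternative route --- a direct tensor-factorisation followed by Mellin/Perron contour shifting and Euler-product analysis, closer in spirit to the Polymath 8b treatment than to Maynard's combinatorial change-of-variables. Both routes establish the lemma; yours has the advantage of making the role of the Euler product (and hence the $\phi(W)/W$ factors) explicit, while the paper's buys brevity at the cost of hiding where the modification to $W$ must be re-checked. You are right that the new content is the uniformity in $W \approx X^{\varepsilon}$, and citing \cite{Ka} and \cite{BFM} for that bookkeeping is exactly what the paper itself does (in prose, in an earlier section, rather than in the proof).

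However, your description of the second identity contains a real error in the bookkeeping. You write that ``$1/[d_k,e_k]$ is replaced by $1/\phi([d_k,e_k])$ with $d_k = e_k = 1$ fixed,'' and that this ``modifies the final-coordinate Euler factors,'' while ``the remaining $k-1$ coordinates are treated exactly as before.'' This has it backwards. Since $d_k = e_k = 1$, the $k$-th slot contributes $\phi([1,1]) = 1$ and its Euler factor is trivial; it is the other $k-1$ coordinates whose denominators change from $[d_i,e_i]$ to $\phi([d_i,e_i])$. In your Euler-product language, each of the $k-1$ non-trivial local factors becomes
$$1 - \frac{1}{(p-1)p^{s_1}} - \frac{1}{(p-1)p^{s_2}} + \frac{1}{(p-1)p^{s_1+s_2}}$$
rather than the first sum's factor, and the ratio of the two at the pole produces a convergent correction $\prod_{p\nmid W}\bigl(1 - 1/(p-1)^2\bigr) = 1 + o(1)$ once $W$ absorbs all small primes. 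The exponent drops from $B^{-k}$ to $B^{1-k}$ simply because one coordinate is absent, not because an ``extra factor of $B$'' is injected by the final slot. If you execute your recipe as written --- treating $k-1$ coordinates identically to the first sum and putting all the $\phi$-modification into the trivial $k$-th slot --- you will not recover $J_k^{(m)}(F)$, since that quantity arises precisely from the $\phi$-weighted coordinates combining with the fixed coordinate to produce $\bigl(\int F\,dt_m\bigr)^2$ rather than $\int F^2\,dt_m$. The fix is straightforward given your framework, but as stated the derivation of the second identity would fail.
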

\begin{proof}
This is essentially the evaluation of (5.2) and (5.18) of \cite{Ma}.  That paper only deals with very small $W$; however, the method applies easily to larger $W$, and others have done exactly this - see e.g. (5.4) and (5.20) of \cite{Ka}.
\end{proof}

\section{The Sum $S$}

From here, we evaluate the two sums.  Define $\mathcal S_k$ to be the set of all functions $F$ that satisfy the criteria listed above, and define


Let
\begin{gather}\label{Mk}
M_k=\sup_{F\in \mathcal S_k}\frac{\sum_{m=1}^{k}J_{k}^{(m)}(F)}{I_k(F)}.
\end{gather}

Then we have the following:

\begin{theorem}\label{S1S2}
For a given $k$, choose $F\in \mathcal S_k$.  Let $S_1$ and $S_2$ be as above, and let $D$, $X$, and $R$ be as defined at the beginning of Section \ref{section8}.  Then

$$S_1=(1+o(1))\frac XWB^{-k}I_k(F)$$
and
$$S_2=(2+o(1))\frac{X}{\phi(W)\log X}B^{1-k}\sum_{m=1}^{k}J_k^{(m)}(F),$$
where the $o(1)$-term goes to zero as $D\to \infty$.
\end{theorem}

Since $\frac{\log R}{\log X}=\frac{\theta}{2}$, $S_2$ can be rewritten as
$$S_2=(\theta+o(1))\frac{X}{W}B^{-k}\sum_{m=1}^{k}J_k^{(m)}(F).$$
This means that if
$$S=S_2-\rho S_1$$
then
$$S=\left((\theta+o(1))\sum_{m=1}^{k}J_k^{(m)}(F)-\rho I_k(F)\right)\frac{X}{W}B^{-k}.$$

The proof here is similar to the proofs in \cite{Ma} or in any number of papers that are based on \cite{Ma}. We split the proof into two parts, one for each sum, below.

\section{The Sum $S_1$}
First, we evaluate
$$S_1=\sum_{\substack{n\equiv v_0\pmod W\\ X\leq n\leq 2X}}\left(\sum_{d_i|n+h_i}\lambda_{d_1,\cdots,d_k}\right)^2.$$
The usual trick is to expand out the square and reverse the order of summation, giving
$$S_1=\sideset{}{'}\sum_{\substack{d_1,\cdots,d_k\\ e_1,\cdots,e_k}}\lambda_{d_1,\cdots,d_k}\lambda_{e_1,\cdots,e_k}\sum_{\substack{n\equiv v_0\pmod W, X\leq n\leq 2X\\ [d_i,e_i]|n+h_i}}1.$$
where the prime indicates that the sum requires the $[d_1,e_1],\cdots,[d_k,e_k],W$ to be pairwise coprime.  The $d_i$ and $e_i$ must all be coprime to $W$ (by choice of $v_0$); moreover, $([d_i,e_i],[d_j,e_j])=1$ since a prime $p|h_i-h_j$ implies $p|W$.  We can then replace the inside sum with $X/(W\prod [d_i,e_i])+O(1)$.  Since $\lambda\ll 1$ and is only supported on $\prod d_i\leq R$, we have
\begin{align*}&\sideset{}{'}\sum_{\substack{d_1,\cdots,d_k\\ e_1,\cdots,e_k}}\lambda_{d_1,\cdots,d_k}\lambda_{e_1,\cdots,e_k}\left(\frac{X}{W\prod [d_i,e_i]}+O(1)\right)\\
=&
\left(\sideset{}{'}\sum_{\substack{d_1,\cdots,d_k\\ e_1,\cdots,e_k}}\lambda_{d_1,\cdots,d_k}\lambda_{e_1,\cdots,e_k}\frac{X}{W\prod [d_i,e_i]}\right)+O(R).
\end{align*}
Hence the error term is negligible.

For the main term, we pull out the $X$ and $W$ and find
$$\frac{X}{W}\left(\sideset{}{'}\sum_{\substack{d_1,\cdots,d_k\\ e_1,\cdots,e_k}}\frac{\lambda_{d_1,\cdots,d_k}\lambda_{e_1,\cdots,e_k}}{\prod [d_i,e_i]}\right)$$
We then invoke Lemma \ref{L41}, giving us
$$S_1=(1+o(1))\frac XWB^{-k}I_k(F).$$




\section{The Sum $S_2$}

Recall first that by Corollary \ref{HI}, if $D|q$ and $\chi(a)=-1$ we have

$$\pi(x,q,a)=\frac{2\pi(X)}{\phi(q)}+O\left(\frac{\pi(X)}{\phi(q)\log^{A-2}(x)}\right).$$
Now, for $S_2$, let us write
$$S_2^{m}=\sum_{\substack{n\equiv v_0\pmod W\\ X\leq n\leq 2X}}\textbf{1}_{\mathbb P}(n+h_m)\left(\sum_{d_i|n+h_i}\lambda_{d_1,\cdots,d_k}\right)^2.$$
Again, we expand out the square and reverse the order of summation, giving
$$=\sideset{}{'}\sum_{\substack{d_1,\cdots,d_{k-1},1\\ e_1,\cdots,e_{k-1},1}}\lambda_{d_1,\cdots,d_k}\lambda_{e_1,\cdots,e_k}\sum_{\substack{n\equiv v_0\pmod W, n\sim X\\ [d_i,e_i]|n+h_i}}\textbf{1}_{\mathbb P}(n+h_m).$$
Evaluating the inside sum gives
$$S_2^{(m)}=\sideset{}{'}\sum_{\substack{d_1,\cdots,d_k\\ e_1,\cdots,e_k}}\lambda_{d_1,\cdots,d_{k-1}}
\lambda_{e_1,\cdots,e_{k-1}}\left(1+O\left(\frac{1}{\log^A X}\right)\right)\left(
\frac{2\pi(X)}{\phi(W)\prod_{i=1}^k\phi([d_i,e_i])}\right).$$
Pulling the $W$'s and $X$'s out front gives
$$\left((1+o(1))\frac{2\pi(X)}{\phi(W)}\right) \sideset{}{'}\sum_{\substack{d_1,\cdots,d_{k-1}\\ e_1,\cdots,e_{k-1}}}\frac{\lambda_{d_1,\cdots,d_k}\lambda_{e_1,\cdots ,e_k}}{\prod_{i=1}^{k-1}\phi([d_i,e_i])}$$
As in the evaluation of $S_1$, by Lemma \ref{L41} we can simplify the inner sum to
\begin{align*}
S_2^{m}=&(2+o(1))\frac{X}{\phi(W)\log X}B^{1-k}J_k^{m}(F)\\
=&(2+o(1))\left(\frac{\theta}{2}\right)\frac{X}{W}B^{-k}J_k^{m}(F).
\end{align*}

\section{Bounds for $H_m$}
From Theorem \ref{S1S2}
$$S=S_2-\rho S_1=(1+o(1))\left((\theta-2\epsilon)\sum_{m=1}^{k}J_k^{m}(F)- \rho I_k(F)\right)\frac XWB^k$$
Let $M_k$ be as defined in (\ref{Mk}).  In order to prove that there exist infinitely many $\rho+1$-tuples, it will then suffice to show that
$$(\theta-2\epsilon)M_k>\rho.$$
The work of Friedlander and Iwaniec allows any $$\theta<\frac{58}{115}\left(1-\frac 1r\right)=.504346916...$$Moreover, since we can choose $L$ to be as large as we like, we can similarly choose epsilon to be as small as we like.  So we will require
$$M_k>\frac{\rho}{.504346916}.$$
From the work of Maynard and Tao \cite{Ma}, it is known that
$$M_k\geq \log k-2\log \log k-2.$$
Polymath 8b \cite{Po} improved this bound to
$$M_k\geq \log k-C$$
for some constant $C$.

For our purposes, it does not matter which bound we use, since setting either one greater than $\frac{\rho}{.504346916}$ will yield
$$k\gg e^{(1.98276)\rho}.$$
From equation (149) in \cite{Po}, the minimum diameter $H(k)$ of a $k$-tuple can be bounded by
$$H(k)\leq k\log k+k\log\log k-k+o(k),$$
So
$$H_m\ll e^{1.9828m}.$$
which is Main Theorem 1.


\section{Bounds for $H_m$ for $m=3,4,5$}

For small values of $m$, we can use the ideas of the Polymath 8b \cite{Po} project to find our bounds for $M_k$.  First, for $m=3$, 4, and 5, we can use Theorem 6.7 in the paper mentioned above:

\begin{theorem67} Let $k \geq 2$, and let $c,T,\tau > 0$ be parameters.  Define the function $g: [0,T] \to \R$ by
\begin{equation*} g(t) \coloneqq \frac{1}{c + (k-1) t}
\end{equation*}
and the quantities
\begin{align*}
m_2 &\coloneqq \int_0^T g(t)^2\ dt \\
\mu &\coloneqq \frac{1}{m_2} \int_0^T t g(t)^2\ dt \\
\sigma^2 &\coloneqq \frac{1}{m_2} \int_0^T t^2 g(t)^2\ dt - \mu^2.
\end{align*}
Assume the inequalities
\begin{align*}
k\mu &\leq 1-\tau \\
k\mu &< 1-T \\
k\sigma^2 &< (1+\tau-k\mu)^2.
\end{align*}
Then one has
\begin{equation*}
 \frac{k}{k-1} \log k - M_k \leq \frac{k}{k-1} \frac{Z + Z_3 + WX + VU}{(1+\tau/2) (1 - \frac{k\sigma^2}{(1+\tau-k\mu)^2})}
\end{equation*}
where $Z, Z_3, W, X, V, U$ are the explicitly computable quantities
\begin{align*}
Z &\coloneqq \frac{1}{\tau} \int_1^{1+\tau}\left( r\left(\log\frac{r-k\mu}{T} + \frac{k\sigma^2}{4(r-k\mu)^2 \log \frac{r-k\mu}{T}} \right) + \frac{r^2}{4kT}\right)\ dr\\
Z_3 &\coloneqq \frac{1}{m_2} \int_0^T kt \log(1+\frac{t}{T}) g(t)^2\ dt \\
W &\coloneqq \frac{1}{m_2} \int_0^T \log(1+\frac{\tau}{kt}) g(t)^2\ dt\\
X &\coloneqq \frac{\log k}{\tau} c^2 \\
V &\coloneqq \frac{c}{m_2} \int_0^T \frac{1}{2c + (k-1)t} g(t)^2\ dt \\
U &\coloneqq \frac{\log k}{c} \int_0^1 \left((1 + u\tau- (k-1)\mu- c)^2 + (k-1) \sigma^2\right)\ du.
\end{align*}
\end{theorem67}
The theorem works by finding a lower bound not for $M_k$ but for a variant $M_k^{[\alpha]}$, which restricts $\mathcal S_k$ to those functions supported on the simplex
$$\mathcal R_k^{[\alpha]}=\{(t_1,...,t_k)\in [0,\alpha]^k:\sum_{i=1}^kt_i\leq 1\}.$$
Here, the authors set $\alpha=T$.  In some sense, this theorem can be seen to build on Zhang's original idea that one can often derive better results by restricting the analysis to smooth moduli.

More specifically, the authors of \cite{Po} seek to maximize $M_k^{[T]}$ by treating it probabilistically, eventually finding (see (114) of \cite{Po})that for any arbitrary $r>1$,
\begin{gather}\label{probdist}\left(\int_0^{r-X_1-\cdots -X_{k-1}}g(t)^2dt\right)\leq \frac{m_2M_k^{[T]}r}{k}\mathbb P\left(X_1+\cdots +X_k\geq r\right) \end{gather}
where the $X_i$ are independent random variables taking values in $[0,T]$ with probability distribution $\frac{1}{m_2}g(t)^2$.  The values of $\mu$ and $\sigma$ for each variable in this distribution are calculated as in the statement of the theorem, and one can see that the goal is then to choose our $c$, $T$, and $\tau$ that maximizes $M_k{[T]}$ in (\ref{probdist}) above.

\cite{Po} defines the parameters $c,T,\tau$ in terms of new parameters $\beta$ and $\eta$ (and our previously defined $k$), where
\begin{align*}
c &:= \frac{\eta}{\log k} \\
T &:= \frac{\beta}{\log k} \\
\tau &= 1 - k\mu
\end{align*}
\cite{Po} then sets about finding the best choices of $\beta$ and $\eta$ to push $M_k$ over the various magical marks that will guarantee a certain number of primes.

$\beta$ and $\eta$ can be chosen arbitrarily.  To find optimal choices for $\beta$ and $\eta$, we performed a Matlab computation where we began with the $\beta$ and $\eta$ chosen in \cite{Po} for the nearest tuple and then let the variables ``walk" until we found apparent maxima.  The results are below:

\begin{theorem} Given the following values for $\beta$, $\eta$, and $k$, we can find the following for $M_k$:\\

(i) $\beta=.973$, $\eta=.9650$: $M_{5229}\geq 5.9484$

(ii) $\beta=.9433$, $\eta=.9786$: $M_{38,801}\geq 7.9310494$

(iii) $\beta=.9215$, $\eta=.9861$: $M_{284,030}\geq 9.913813$
\end{theorem}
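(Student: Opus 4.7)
The plan is to apply Polymath 8b, Theorem 6.7 directly, using the given values of $\beta$ and $\theta$ for each of the three cases. The first step is translation: from $\beta$ and $\theta$ one recovers $c = \theta/\log k$ and $T = \beta/\log k$, and then $\mu$ and $\sigma^2$ are computed as explicit (one-dimensional) integrals of the rational function $g(t) = 1/(c + (k-1)t)$ against $1$, $t$, and $t^2$ on $[0,T]$. Since $g$ is elementary, all of these integrals evaluate in closed form (in terms of logarithms and rational expressions in $c$ and $T$); a direct substitution of the decimal values of $\beta$, $\theta$ and $k$ from the statement then reduces everything to finite-precision arithmetic. Once $\mu$ and $\sigma^2$ are in hand, set $\tau = 1 - k\mu$ as prescribed.

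Next, one must verify the three feasibility hypotheses $k\mu \leq 1 - \tau$, $k\mu < 1 - T$, and $k\sigma^2 < (1+\tau-k\mu)^2$. The first is automatic from the definition $\tau = 1 - k\mu$, so only the latter two need to be checked numerically for each of the three triples $(\beta,\theta,k)$. This is a straightforward inequality check with the values produced in the preceding step.

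With feasibility confirmed, the remaining work is to bound each of the six quantities $Z$, $Z_3$, $W$, $X$, $V$, $U$ from above. Five of these are one-dimensional integrals of elementary functions over $[0,T]$ or $[1,1+\tau]$ or $[0,1]$, and $X$ is a closed-form expression; each can be bounded rigorously either by exact evaluation or by standard numerical quadrature combined with a coarse Lipschitz or monotonicity estimate to control the remainder. The bound of Theorem 6.7 then reads
\[
M_k \;\geq\; \frac{k}{k-1}\log k \;-\; \frac{k}{k-1}\cdot\frac{Z + Z_3 + WX + VU}{(1+\tau/2)\left(1 - \tfrac{k\sigma^2}{(1+\tau-k\mu)^2}\right)},
\]
and plugging in the computed numerics yields the claimed inequalities $M_{5229} \geq 5.9484$, $M_{38{,}802} \geq 7.93106$, and $M_{284{,}031} \geq 9.9138119$.

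The main obstacle is not conceptual but computational: for $k$ in the hundreds of thousands, $c$ and $T$ are small but not negligible, and the margin between the computed $M_k$ and the target value is tight (for instance $5.9484$ vs.\ the target $5.94$ needed to beat Elliott--Halberstam for $H_3$). Care must therefore be taken with rounding in the numerical quadratures of $Z$ and $U$, and with the logarithmic singularity-like behavior of the integrand in $Z$ near $r = k\mu$; following Polymath 8b, one can handle this by evaluating the relevant integrals with enough precision (or by rigorous interval arithmetic) so that the final error is comfortably smaller than the gap to the advertised bound. Since the parameters $\beta$ and $\theta$ here are deliberately chosen to be the same as those used by Polymath 8b for nearby values of $k$, the required numerical estimates are essentially the same as those already carried out there, and no new analytic input is needed.
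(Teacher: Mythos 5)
Your proposal matches the paper's approach exactly: the paper simply cites Polymath 8b's Theorem 6.7, borrows the $\beta,\theta$ values from their Table 3 for nearby $k$, and states the resulting numerical bounds as ``we can verify,'' with no further detail given. Your outline of the verification --- recovering $c$ and $T$, computing $\mu$, $\sigma^2$, $\tau$, checking the feasibility hypotheses, and numerically bounding $Z, Z_3, W, X, V, U$ --- is precisely the computation the paper is implicitly appealing to; one small slip is that the relevant threshold for (i) is $3/.504346916 \approx 5.94828$, not $5.94$, so the margin $5.9484 - 5.94828 \approx 1.2 \times 10^{-4}$ is even tighter than you suggest, but your overall point about the need for careful numerics stands.
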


Noting that
\begin{gather*}
5.9484>\frac{3}{.504346916}=5.94828...\\
7.9310494>\frac{4}{.504346916}=7.9310487...\\
9.9138119>\frac{5}{.504346916}=9.9138109...
\end{gather*}
we see that these choices of $k$ are the desired ones.


As in \cite{Po}, we will let $H(k)$ denote the minimal diameter $h_k-h_1$ of an admissible $k$-tuple.  Andrew Sutherland was kind enough to compute the minimal tuples\footnote{The tuples are currently hosted at \par \url{https://math.mit.edu/~drew/admissible_5229_49192.txt}, \par \url{https://math.mit.edu/~drew/admissible_38802_439820.txt}, \par \url{https://math.mit.edu/~drew/admissible_284031_3775886.txt}.  \par Note that for the latter two, the tuples are one term larger than for the results stated above; the bounds we state are then the smallest 38801-tuple and 284030-tuple that can be found on these lists.  Interestingly, the direct calculations of a 38801-tuple and a 284030-tuple ran into issues where the algorithm fell into a ``valley" of sorts and ended with a much larger tuple than the above, so we revert to the linked lists.} for the three numbers above:
\begin{gather*}
H_3\leq H(5229)\leq 49,192,\\
H_4\leq H(38,801)\leq 439,812,\\
H_5\leq H(284,030)\leq 3,775,860.
\end{gather*}
This proves Main Theorem 2 for $H_3$, $H_4$, and $H_5$.

\section{Bounds for $H_m$ for $m=2$}

For small values of $m$, concrete values for $M_k$ have been calculated by the Polymath project.  We skip the $m=1$ case because it is inferior to Heath-Brown's result; if $m=2$, Nielsen's table of results for $M_k$ indicates that
$$M_{53}\geq 3.986213$$
and $3.986213$ is greater than $\frac{2}{.504346916}=3.96552...$.  So assuming the existence of infinitely many strong Siegel zeroes, we have
$$H_2\leq 264,$$
which falls between the $H_2$ found if one assumes EH (270) and that found if one assumes GEH (252).

\section{Acknowledgements}
I would like to thank Andrew Sutherland for his help in computing the tuples and for alerting me to the work of \cite{St}, as well as catching a couple of important discrepancies between \cite{Po} and the Polymath website.  I would also like to thank Beau Christ and Jamie Wright for their technological help.  Finally, I would like to thank the referee for numerous helpful comments and suggestions.
\bibliographystyle{line}

\begin{thebibliography}{[HD]}
\normalsize
\bibitem[BI]{BI} R.C. Baker, A.J. Irving. Bounded intervals containing many primes, Math. Z. 286 (2017), 821–-841.
\bibitem[BFM]{BFM} W.D. Banks, T. Freiberg, J. Maynard. On limit points of the sequence of normalized prime gaps,
Proc. Lond. Math. Soc., (3) 113 (2016), 515–539.
\bibitem[Da]{Da} H. Davenport. On character sums in a finite field, Acta Mathematica 71 (1939), 99--121.
\bibitem[FI03]{FI} J. Friedlander and H. Iwaniec, Exceptional characters and prime numbers in arithmetic progressions, Int. Math. Res. Not. 37 (2003), 2033–-2050.
\bibitem[FI85]{FIt} J. Friedlander and H. Iwaniec, Incomplete Kloosterman Sums and a Divisor Problem, Ann. of Math., Second Series, Vol. 121 (2) (1985), 319--344.
\bibitem[Fo]{Fo} K. Ford, Large prime gaps and progressions with few primes, Riv. Math. Univ.
Parma , vol. 12 (1) (2021), 41–-47.
\bibitem[HB]{HB83} D. R. Heath-Brown, Prime twins and Siegel zeros, Proc. London Math. Soc. (3) 47 (1983), no. 2, 193--224.
\bibitem[Ka]{Ka} D.A. Kaptan, A note on small gaps between primes in arithmetic progressions, Acta Arith., 172 (2016), 351–-375.
\bibitem[La]{La} E. Landau, Über die Klassenzahl imaginär-quadratischer Zahlkörper, Nachr. Ges. Wiss. Göttingen (1918), 285--295.
\bibitem[Ma]{Ma} J. Maynard, Small gaps between primes, Ann. of Math. 181 (2015), 383–-413.
\bibitem[Mo1]{Mo1} H. L. Montgomery, Primes in arithmetic progressions, Michigan Math. J. 17 (1970), 33--39.
\bibitem[Mo2]{Mo2} H. L. Montgomery, Topics in Multipicative Number Theory, Lecture Notes in
Mathematics 227, Springer-Verlag, Berlin (1971).
\bibitem[Po]{Po} D. H. J. Polymath, Variants of the Selberg sieve and bounded gaps between primes, Research in the Mathematical Sciences
1:12 (2014), 1--83.
\bibitem[St]{St} Stadlmann, J.  On primes in arithmetic progressions and bounded gaps between many primes,
	arXiv:2309.00425.
\bibitem[TT]{TT} T. Tao and J. Ter\"{a}v\"{a}inen, The Hardy-Littlewood-Chowla conjecture in the presence of a Siegel zero, preprint, https://arxiv.org/pdf/2109.06291.pdf.
\bibitem[Zh]{Zh} Y. Zhang, Bounded gaps between primes, Annals Math 179 (2014), 1121–-1174.
\end{thebibliography}



\end{document}